\documentclass[11pt,a4paper]{article}

\usepackage{amsmath}
\usepackage{amssymb,latexsym}
\usepackage{mathabx}
\usepackage{amsthm}
\usepackage[shortlabels]{enumitem}
\usepackage{array}
\usepackage[hang,flushmargin,bottom]{footmisc}
\usepackage[margin=0.35in,format=hang,font=small,labelfont=bf]{caption}
\usepackage[normalem]{ulem}
\usepackage{needspace}
\usepackage{graphicx,tikz}
\usepackage[colorlinks=true,urlcolor=blue,linkcolor=blue,citecolor=blue]{hyperref}
\usepackage{palatino}
\usepackage{eulervm}

\usepackage{ifthen}
\usetikzlibrary{calc}
\usetikzlibrary{decorations.pathreplacing,decorations.markings}

\newcommand{\plotptradius}{4pt}

\tikzset{permpt/.style={circle, draw, fill=black, inner sep=0pt, minimum width=\plotptradius}}
\tikzset{empty/.style={draw=none, fill=none}}

\newcommand\absdot[2]{
	\node[permpt] at #1 {};
}

\newcommand{\plotperm}[2][black]{ 
	\foreach \j [count=\i] in {#2} {
        \ifnum0=\j {} \else {
 		\node[permpt,fill=#1,draw=#1] (\j) at (\i,\j) {};
	} \fi
	};
}
\newcommand{\plotpermbox}[4]{
	\draw [darkgray, very thick, line cap=round, fill=white]
		({#1-0.5}, {#2-0.5}) rectangle ({#3+0.5}, {#4+0.5});
}

\newcommand{\plotpermborder}[1]{
	\foreach \i [count=\nn] in {#1} {\global\let\n\nn};    
	\plotpermbox{1}{1}{\n}{\n};
	\plotperm{#1};
}
\newcommand{\plotpermbordergrid}[1]{
	\foreach \i [count=\nn] in {#1} {\global\let\n\nn};    
	\plotpermbox{1}{1}{\n}{\n};
	\draw[step=1cm,gray!50,very thin] (1,1) grid (\n,\n);
	\plotperm{#1};
}
\newcommand{\plotgrid}[1]{
	\draw[step=1cm,gray!50,very thin] (0.5,0.5) grid (#1+0.5,#1+0.5);
}

\newcommand{\plotpermgrid}[1]{
	\foreach \i [count=\nn] in {#1} {\global\let\n\nn};    
	\plotgrid{\n};
	\plotperm{#1};
}

\newcommand{\plotpinsequence}[1]{
	\absdot{(0,0)}{};
	\edef\n{0}
	\edef\s{0}
	\edef\e{0}
	\edef\w{0}
	\edef\x{0}
	\edef\y{0}
	\foreach \pin [remember=\pin as \oldpin (initially 1), count=\i] in {#1} {
		\ifthenelse{\pin=1 \OR \pin=2}{
			\ifthenelse{\oldpin=3}{
				\xdef\x{\number\numexpr\e-1}
			}{
				\xdef\x{\number\numexpr\w+1}
			}
			\ifnum\i=1 
				\pgfmathparse{\e+1}
 				\xdef\e{\pgfmathresult}
			\fi	
		}{ 
			\ifthenelse{\oldpin=1}{
				\xdef\y{\number\numexpr\n-1}
			}{
				\xdef\y{\number\numexpr\s+1}
			}
			\ifnum\i=1 
				\pgfmathparse{\s-1}
 				\xdef\s{\pgfmathresult}
			\fi	
		}
		\ifnum\pin=1 
			\pgfmathparse{\n+2}
 			\xdef\n{\pgfmathresult}		
			\absdot{(\x,\n)}{};
			\ifnum\i>1
				\draw (\x,\n) -- (\x,\y-0.5);
			\else
				\draw[gray,very thick] (-0.5,-0.5) rectangle (\x+0.5,\n+0.5);
			\fi
		\fi
		\ifnum\pin=2 
			\pgfmathparse{\s-2}
 			\xdef\s{\pgfmathresult}
			\absdot{(\x,\s)}{};
			\ifnum\i>1
				\draw (\x,\s) -- (\x,\y+0.5);
			\else
				\draw[gray,very thick] (-0.5,0.5) rectangle (\x+0.5,\s-0.5);
			\fi
		\fi
		\ifnum\pin=3 
			\pgfmathparse{\e+2}
 			\xdef\e{\pgfmathresult}
			\absdot{(\e,\y)}{};
			\ifnum\i>1
				\draw (\e,\y) -- (\x-0.5,\y);
			\else
				\draw[gray,very thick] (-0.5,+0.5) rectangle (\e+0.5,\y-0.5);
			\fi
		\fi
		\ifnum\pin=4 
			\pgfmathparse{\w-2}
 			\xdef\w{\pgfmathresult}
			\absdot{(\w,\y)}{};
			\ifnum\i>1
				\draw (\w,\y) -- (\x+0.5,\y);
			\else
				\draw[gray,very thick] (0.5,0.5) rectangle (\w-0.5,\y-0.5);

			\fi
		\fi		
	};
}

\tikzset{
  on each segment/.style={
    decorate,
    decoration={
      show path construction,
      moveto code={},
      lineto code={
        \path [#1]
        (\tikzinputsegmentfirst) -- (\tikzinputsegmentlast);
      },
      curveto code={
        \path [#1] (\tikzinputsegmentfirst)
        .. controls
        (\tikzinputsegmentsupporta) and (\tikzinputsegmentsupportb)
        ..
        (\tikzinputsegmentlast);
      },
      closepath code={
        \path [#1]
        (\tikzinputsegmentfirst) -- (\tikzinputsegmentlast);
      },
    },
  },
  mid arrow/.style={postaction={decorate,decoration={
        markings,
        mark=at position .5 with {\arrow[#1]{stealth}}
      }}},
}
\usetikzlibrary{decorations,patterns}

\newtheorem{theorem}{Theorem}[section]
\newtheorem*{theorem*}{Theorem}

\newtheorem{lemma}[theorem]{Lemma}

\newtheorem*{conj*}{Conjecture}

\theoremstyle{definition}

\newtheorem{defn*}{Definition}

\newtheorem*{example*}{Example}

\newtheorem*{comment*}{Comment}

\let\C\CCC

\tikzstyle{vertex}=[circle, draw, fill=black,
                        inner sep=0pt, minimum width=4pt]

\title{Hereditary pattern-free classes are not always 2-wqo}

\author{Robert Brignall\\
\small School of Mathematics and Statistics\\
\small The Open University, UK
}

\begin{document}
\maketitle

\begin{abstract}
A pattern is a fundamental object used in the proof of Duron, M\"ahlmann and Toru\'nczyk to show that hereditary 2-wqo graph classes have bounded clique-width. We answer a question in that paper by exhibiting a pattern-free hereditary graph class that is not 2-wqo.
\end{abstract}

%
%
%
%
%
%
%
%
\section{Introduction}

Duron, M\"ahlmann, and Toru\'nczyk~\cite{duron:hereditary-2-wqo} recently proved that every 2-well-quasi-ordered (2-wqo) hereditary class has bounded clique-width and thus (via recent work of~Dumas and Lopez~\cite{dumas:wqo-clique-width}) gave a positive resolution to Pouzet's long-standing conjecture~\cite{pouzet:un-bel-ordre-da:}: 2-wqo is equivalent to labelled wqo. As part of this proof, they showed that every \(2\)-wqo class is pattern-free, and ask whether the converse holds. In this note we exhibit a class to show that this is not the case.

Let \(W_\infty\) be the (countably infinite) graph with vertex set $U\cup V$ where \(U=\{u_1,u_2,\ldots\}\) is an independent set,
\(V=\{v_1,v_2,\ldots\}\) is a clique, and $u_i v_j\in E(W_\infty)$ if and only if $i=j$ or $i\leq j-2$.
%
See Figure~\ref{fig:widdershins-graph} for an illustration of the first few vertices of $W_\infty$.  We consider the age of \(W_\infty\),
\[
    \mathcal C=\operatorname{Age}(W_\infty),
\]
that is, the hereditary class of all graphs isomorphic to finite
induced subgraphs of \(W_\infty\).

This class is precisely the class of graphs considered by Brignall, Engen and Vatter~\cite{bev:lwqo}, and there it is shown that $\C$ is not 2-wqo.\footnote{See Proposition 4.4 of~\cite{bev:lwqo}. Note that we do not define or use 2-wqo here.} Its construction derives from the study of permutation patterns -- in particular, $\C$ comprises all inversion graphs arising from the downward closure of the so-called `Widdershins spiral', which first appears in the PhD thesis of Max Murphy~\cite{murphy:restricted-perm:}. 

\begin{figure}[t]
    \centering
    \begin{tikzpicture}[xscale=1, yscale=1.8, baseline=(current bounding box.center)]
    \foreach \x/\y in {1/0,1/1,2/0,2/1,3/0,3/1,4/0,4/1,5/0,5/1,6/0,6/1,7/0,7/1,8/0,8/1}
     \node[permpt] at (\x,\y) {};
    \foreach \x in {1,2,3,4,5,6} {
      \draw (\x,0) -- (\x,1);
      \pgfmathsetmacro\ystart{\x + 2};
      \foreach \y in {\ystart,...,8}
      \draw (\x,0) -- (\y,1);
    }
    \draw (8,0) -- (8,1);
    \draw (7,0) -- (7,1);
    \draw (1,1) -- (8,1);
    \node at (1,1) [below right] {$v_1$};
    \node at (2,1) [below right] {$v_2$};
    \node at (3,1) [below right] {$v_3$};
    \node at (1,0) [below right] {$u_1$};
    \node at (2,0) [below right] {$u_2$};
    \node at (3,0) [below right] {$u_3$};
    \foreach \x/\y in {1/3,1/4,1/5,1/6,2/4,2/5,2/6,3/5,3/6,4/6,1/7,2/7,3/7,4/7,5/7,1/8,2/8,3/8,4/8,5/8,6/8}
      \draw (\x,1) to [in=165, out=15] (\y,1);	
      \node[empty] at (9,0) {$\cdots$};	
      \node[empty] at (9,1) {$\cdots$};	
  \end{tikzpicture}
    \caption{The first 16 vertices of graph \(W_\infty\).}
    \label{fig:widdershins-graph}
\end{figure}

%
%
%
%
%
%
%
%


For a positive integer $n$, let $[n]$ denote the set $\{1,\dots,n\}$. Let \(m,r\) be positive integers. Following~\cite{duron:hereditary-2-wqo}, an \emph{\((m,r)\)-pattern} is a
graph \(P\) with vertex set \([m]\times[r]\), and edges such that the following conditions are satisfied.
\begin{enumerate}[(i)]
    \item For each $j\in [r]$, the $j$th \emph{layer} \(L_j:=\{(i,j):i\in[m]\}\) is either a clique or an independent set.
    \item If \(|j-j'|>1\), then \(L_j\) and \(L_{j'}\) are either
          complete or anticomplete to one another.
    \item For each \(j\in[r-1]\), there is a relation $\triangleleft_j$ in $\{=,\neq,<,\leq,>,\geq\}$
          such that, for all \(i,i'\in[m]\),
          \[
              (i,j)(i',j+1)\in E(P)
              \quad\text{ if and only if }\quad
              i\mathrel{\triangleleft_j}i'.
          \]
\end{enumerate}
For convenience later, it is also helpful to define the $i$th \emph{row} by $R_i:=\{(i,j):j\in[r]\}$. We also remark here that these patterns are finite versions of the grids used in Brignall and Cocks' framework for unbounded clique-width~\cite{brignall:framework}.

A graph is \emph{\((m,r)\)-pattern-free} if it contains no
\((m,r)\)-pattern as an induced subgraph. A graph class is
\emph{\((m,r)\)-pattern-free} if each of its members is, and it is
\emph{pattern-free} if it is \((m,r)\)-pattern-free for some
positive integers \(m\) and \(r\).

The result in this note is the following.

\begin{theorem}\label{thm:main}
The class \(\C\) is \((14,4)\)-pattern-free.
\end{theorem}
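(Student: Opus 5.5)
The plan is to use only the split structure of $W_\infty$ and the ``half-graph with a defect'' form of its $U$--$V$ edges. Suppose for contradiction that some finite induced subgraph of $W_\infty$ contains a $(14,4)$-pattern $P$. We may view $P$ as an induced subgraph of $W_\infty$, so every vertex of $P$ is some $u_a$ or some $v_b$.

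\textbf{Step 1: make each layer pure.} Every layer $L_j$ is a clique or an independent set. A clique meets the independent set $U$ in at most one vertex, and an independent set meets the clique $V$ in at most one vertex. So each layer has at most one ``exceptional'' vertex. Deleting the rows $R_i$ that contain an exceptional vertex removes at most $4$ rows. Relabelling the remaining rows in order keeps every relation $\triangleleft_j$, so we obtain an $(m',4)$-pattern with $m'\ge 10$ in which each layer lies entirely in $U$ or entirely in $V$.

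\textbf{Step 2: force the types to alternate.} Each of the six relations in $\{=,\neq,<,\leq,>,\geq\}$ is non-constant on $[m']\times[m']$ once $m'\ge 2$. Hence consecutive layers are neither complete nor anticomplete to each other. Two $U$-layers are always anticomplete and two $V$-layers are always complete, so consecutive layers must have different types. The layer types are therefore $U,V,U,V$ or $V,U,V,U$. In either case $L_1$ and $L_4$ have opposite types, and condition (ii) requires them to be complete or anticomplete to one another.

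\textbf{Step 3: reduce to index comparisons.} Write $a(i,j)$ for the index of the vertex $(i,j)$. Then $u_a v_b$ is an edge exactly when $b\ge a+2$ or $b=a$, and a non-edge exactly when $b\le a-1$ or $b=a+1$. So, apart from the two ``defect'' values $b\in\{a,a+1\}$, adjacency is a strict comparison of indices, just as in a half-graph.

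For each consecutive pair of layers I would check which relations $\triangleleft_j$ are realisable at all. For instance, $=$ would need $u_a$ to be adjacent to a single $v_b$ among ten, and the comparison structure rules this out except in tightly constrained configurations. For the realisable relations, the within-layer index sequences are forced to be monotone in $i$ and to interleave with the neighbouring layer. Typically this gives $a(i,j)<a(i',j+1)<a(i+2,j)$, or the analogous statement for $>$-type relations, possibly after discarding one or two further rows to absorb the defect values. Chaining these interleavings through $L_1,L_2,L_3,L_4$ shows that the index ranges of $L_1$ and $L_4$ genuinely overlap. Concretely, some row-$1$ vertex of $L_4$ has index below some late vertex of $L_1$, and some late vertex of $L_4$ has index well above an early vertex of $L_1$. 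This produces both an edge and a non-edge between $L_1$ and $L_4$, contradicting Step 2.

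\textbf{Expected main obstacle.} The hard part is Step 3. It requires a case analysis over the relations on three consecutive pairs, together with careful bookkeeping of the off-by-one defect $b\in\{a,a+1\}$, which can make an ``equality-type'' relation look like a comparison. I would first prove a small lemma. If a $U$-layer and a $V$-layer of size at least $k$ realise a relation $\triangleleft$, then after deleting at most a constant number of rows their indices strictly interleave, in the order prescribed by $\triangleleft$, with gaps of at least $2$, so that the defect no longer matters. The number $14$ should leave exactly enough slack for this: $4$ rows are lost in Step 1, and the remaining $10$ must survive the deletions in the lemma with enough rows left to witness both an edge and a non-edge between $L_1$ and $L_4$.
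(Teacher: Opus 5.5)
Your Steps 1 and 2 match the paper's opening moves: deleting at most four rows makes the pattern clean, and consecutive layers must then alternate between $U$ and $V$. Your target is also the right one, namely that $L_4$ reaches inside the index range of $L_1$. There is a genuine gap, though. Step 3, which carries the whole contradiction, is only a plan, and the lemma you propose for it is false.

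A constant number of row deletions cannot in general make the indices of two consecutive layers monotone in the row and strictly interleaved with gaps of at least $2$. Two examples show this:
\begin{enumerate}[(a)]
\item Put $u_{2i}$ and $v_{2i}$ in row $i$. This realises $\leq$, yet every row contains a $U$--$V$ pair at index distance $0$, and deleting rows never removes it.
\item Take the rows $(u_6,u_5,u_{16},u_{15},u_{30})$ of $L_j$ and $(v_2,v_6,v_{10},v_{16},v_{20})$ of $L_{j+1}$. These realise $<$ with two order reversals, and the construction repeats, so restoring monotonicity can cost half the rows.
\end{enumerate}
Your treatment of $=$ also stops at ``except in tightly constrained configurations'', and $\neq$ is never addressed. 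Consequently the closing claim that $14$ ``should leave exactly enough slack'' is unsupported.

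The missing idea is to stop tracking rows and instead count vertices in $\prec$-intervals, where $v_1\prec u_1\prec v_2\prec u_2\prec\cdots$.
\begin{enumerate}[(1)]
\item For $q\geq p+2$ one has $N(v_p)\cap U\subseteq N(v_q)\cap U$ and $N(u_q)\cap V\subseteq N(u_p)\cap V$. Apply this to the first and third of any three vertices of a layer. This excludes $=$ and $\neq$ at once, so the neighbourhoods of the vertices of $L_j$ in $L_{j+1}$ form a chain of distinct sets.
\item Two vertices $x\prec y$ are adjacent if and only if $y\in U$ when they are consecutive, and $y\in V$ otherwise. Hence any vertex adjacent to exactly one of two same-part vertices of an interval $I$ lies in $I^+$.
\item Compare the smallest and largest neighbourhoods in the chain. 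It follows that $k$ vertices of $L_j$ in $I$ force at least $k-1$ vertices of $L_{j+1}$ in $I^+$.
\item Let $I$ be the span of $L_1=\{x_1\prec\cdots\prec x_{10}\}$. Then $I^{+3}$ contains at least $7$ vertices of $L_4$. At most $4$ of them lie outside $I$, and at most $2$ are consecutive to $x_1$ or $x_{10}$.
\item So some $w\in L_4$ lies strictly between $x_1$ and $x_{10}$ and is consecutive to neither. This $w$ is adjacent to $x_1$ if and only if $w\in V$, and to $x_{10}$ if and only if $x_{10}\in V$. Since $L_1$ and $L_4$ lie in opposite parts, exactly one of these holds, which is the contradiction.
\end{enumerate}
This route needs no case analysis on the relations, no monotonicity, and no deletions beyond the initial four.
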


%
%
%
%
%
%
%
%
\section{Proof of Theorem~\ref{thm:main}}
The class $\C$ contains a pattern $P$ if and only if $P$ is an induced subgraph of $W_\infty$. Thus, it suffices to consider patterns that are embedded in $W_\infty$.   Order the vertices of \(W_\infty\) by
\[
    v_1\prec u_1\prec v_2\prec u_2\prec\cdots.
\]
For \(x\prec y\), by definition we have
\begin{equation}\label{eq:order-adjacency}
    xy\in E(W_\infty)
    \quad\Longleftrightarrow\quad
    \begin{cases}
        y\in U, & \text{if \(x\) and \(y\) are consecutive in \(\prec\)},\\
        y\in V, & \text{otherwise}.
    \end{cases}
\end{equation}

For an interval \(I\) of $U\cup V$ under the \(\prec\) ordering, let \(I^+\) denote the interval
obtained by adjoining (when they exist) the vertices of $U\cup V$ that immediately
precede and succeed \(I\), and write \(I^{+t}\) for the result of
iterating this operation \(t\) times. It follows from~\eqref{eq:order-adjacency} that if \(x,x'\) are distinct vertices lying in an interval $I$ such that both belong to $U$ or both to $V$, then every vertex adjacent to exactly one of \(x,x'\) lies in \(I^+\).

We say that a pattern $P$ embedded in $W_\infty$ is \emph{clean} if each of its layers
is contained wholly in \(U\) or wholly in \(V\). 

\begin{lemma}\label{lem:interval-propagation}
Fix $m\geq 3$ and $r$. Let \(P\) be a clean \((m,r)\)-pattern in \(W_\infty\), and let \(L_j,L_{j+1}\) be consecutive layers of \(P\). Then the following hold.
\begin{enumerate}[(1)]
\item $L_j$ and $L_{j+1}$  lie in opposite parts.
\item The relation between $L_j$ and $L_{j+1}$ is one of $\{<,\leq,>,\geq\}$.
\item If an interval $I$ contains at least \(k\) vertices from \(L_j\), the interval \(I^+\) contains at least \(k-1\) vertices of \(L_{j+1}\).
\end{enumerate}
\end{lemma}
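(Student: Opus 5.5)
The plan is to use only the structure of $W_\infty$: $U$ is independent, $V$ is a clique, and for each $u_i$ we have $N(u_i)\cap V=\{v_i\}\cup\{v_y: y\ge i+2\}$. Since $P$ is clean, each layer lies in $U$ or in $V$. The three parts will be handled in order, with part (2) the only one needing a genuine case analysis.

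\textbf{Part (1).} If $L_j,L_{j+1}\subseteq U$, they are anticomplete. So the relation $\triangleleft_j$ would have to hold for no pair $(i,i')\in[m]^2$. But every relation in $\{=,\neq,<,\leq,>,\geq\}$ holds for some pair once $m\ge 2$, for example $(1,1)$ or $(1,2)$ or $(2,1)$. If instead $L_j,L_{j+1}\subseteq V$, they are complete. This is also impossible, because every relation fails on some pair, for example $(1,2)$ for $=$ and $>$, $(1,1)$ for $\neq,<,>$, and $(2,1)$ for $\leq$. Hence the two layers lie in opposite parts.

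\textbf{Part (2).} By part (1) and symmetry (both $=$ and $\neq$ are symmetric relations) we may assume $L_j\subseteq U$ and $L_{j+1}\subseteq V$. Write the $U$-vertices as $u_{a_1},\dots,u_{a_m}$ with $a_1<\dots<a_m$. Under $=$ or $\neq$, each $u_{a_s}$ has a distinct partner $v_{y_s}$ in the other layer, namely its row-mate. Let $Y=\{y_1,\dots,y_m\}$.

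For $=$: $u_{a_1}$ must be non-adjacent to $v_{y_2}$ and $v_{y_3}$. So $y_2,y_3\notin\{a_1\}\cup[a_1+2,\infty)$. On the other hand $y_s\in\{a_s\}\cup[a_s+2,\infty)$, because $u_{a_s}v_{y_s}$ is an edge. Since $a_s>a_1$, this forces $y_s=a_s=a_1+1$ for both $s=2,3$, which is a contradiction.

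For $\neq$: each $u_{a_s}$ misses exactly one element of $Y$, namely $y_s$. The set of $y\in Y$ missed by $u_{a_s}$ is
\[
M_s=\{y\in Y: y<a_s\}\cup(\{a_s+1\}\cap Y),
\]
so each $M_s$ must be a singleton, and these singletons are distinct.
\begin{itemize}
\item If $y_1<a_1$, then $y_1\in M_2$, so $M_2=M_1$, which is impossible. Hence $y_1=a_1+1$.
\item Since $a_1+1\in Y$ and $a_1+1\notin M_2$, we get $a_2\le a_1+1$, so $a_2=a_1+1$.
\item Then $a_3\ge a_1+2$, so $a_1+1\in M_3$, giving $M_3=M_1$, a contradiction.
\end{itemize}
This step uses $m\ge 3$, and I expect it to be the main (though still modest) obstacle.

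\textbf{Part (3).} By part (2) the relation is one of $<,\le,>,\ge$. Let the vertices of $L_j$ in $I$ be $(i_1,j),\dots,(i_k,j)$ with $i_1<\dots<i_k$. For each consecutive pair $x=(i_s,j)$ and $x'=(i_{s+1},j)$, we exhibit a vertex of $L_{j+1}$ adjacent to exactly one of them.
\begin{itemize}
\item For $<$, use $(i_{s+1},j+1)$: it is adjacent to $x$ but not to $x'$.
\item For $\le$, use $(i_s,j+1)$.
\item For $>$ and $\ge$, use the symmetric choices $(i_s,j+1)$ and $(i_{s+1},j+1)$ respectively.
\end{itemize}
Since $x,x'\in I$ lie in the same part, the remark preceding the lemma puts this distinguishing vertex in $I^+$. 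Distinct values of $s$ give distinct rows, and hence distinct vertices. This yields $k-1$ vertices of $L_{j+1}$ in $I^+$.
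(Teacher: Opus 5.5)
Your proof is correct. There is one harmless slip in (1): in the ``complete'' case you list $>$ twice and omit $\geq$, which fails on $(1,2)$.

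Part (2) is where you take a genuinely different route. The paper avoids case analysis by observing that $N(v_p)\cap U\subseteq N(v_q)\cap U$ and $N(u_q)\cap V\subseteq N(u_p)\cap V$ whenever $p+2\le q$. Take three vertices $a\prec b\prec c$ of a clean layer. The outer two have indices differing by at least $2$, so their neighbourhoods in $L_{j+1}$ are nested. Under $=$ or $\neq$, however, distinct vertices have distinct neighbourhoods of equal size, which cannot be nested. This handles $L_j\subseteq U$ and $L_j\subseteq V$ simultaneously, so no symmetry reduction is needed. It also isolates the structural reason: between $U$ and $V$, neighbourhoods are nested except for index-adjacent vertices. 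Your index computation reaches the same conclusion by direct arithmetic. It needs the reduction to $L_j\subseteq U$ and separate treatments of $=$ and $\neq$, but it is fully explicit and shows exactly where the third vertex enters.

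In (3) the difference is only bookkeeping. The paper takes the smallest and largest neighbourhoods $A\subseteq B$ in the chain and notes that all of $B\setminus A$, which has at least $k-1$ vertices, distinguishes the two extreme vertices. You instead exhibit one distinguishing vertex per consecutive pair in row order, distinct because their rows are distinct. Both rest on the same remark placing distinguishing vertices in $I^+$.
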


\begin{proof}
(1) If $L_j$ and $L_{j+1}$ were both contained in $U$, then the two layers would be anticomplete to one another. Similarly, if they were both contained in $V$, then they would be complete to one another. However, consecutive layers must be joined according to one of the six relations $ \{=,\neq,<,\leq,>,\geq\}$.

(2) 
From the definition of \(W_\infty\), whenever \(p+2\leq q\) we have
\[
    N(v_p)\cap U\subseteq N(v_q)\cap U,
    \qquad
    N(u_q)\cap V\subseteq N(u_p)\cap V.
\]
Now choose any three distinct vertices \(a\prec b\prec c\) in \(L_j\).
By the above, we have that one of \(N(a)\cap L_{j+1}\) and \(N(c)\cap L_{j+1}\) is a subset of the other. This rules out both the possibility that the relation between $L_j$ and $L_{j+1}$ is `$=$' (corresponding to a matching between the layers), and that the relation is `$\neq$' (corresponding to the bipartite complement of a matching).

(3) By (2), the neighbourhoods in \(L_{j+1}\) of the vertices of
\(L_j\) are distinct, and linearly ordered by inclusion. Choose \(k\) vertices of
\(L_j\cap I\), and among their neighbourhoods in \(L_{j+1}\) let
\(A\) and \(B\) be the smallest and largest. Clearly $A\subseteq B$, but we must also have $|B\setminus A|\geq k-1$, since there are at least $k-2$ nested sets lying between $A$ and $B$. 

Furthermore, every vertex of \(B\setminus A\) distinguishes the two corresponding vertices of \(L_j\). Since both of those vertices lie in \(I\), every
vertex of \(B\setminus A\) lies in \(I^+\). Hence
\[
    |L_{j+1}\cap I^+|\geq k-1,
\]
as required.
\end{proof}

\begin{proof}[Proof of Theorem~\ref{thm:main}]
Suppose for a contradiction that \(W_\infty\) contains a \((14,4)\)-pattern $P$. 
Since two vertices from $U$ are nonadjacent, a clique layer of $P$ contains at most one vertex of \(U\). Similarly, an independent layer contains at most one vertex of \(V\). Thus, for each layer delete the row containing its possible exceptional
vertex. After deleting at most four rows, choose ten of the remaining
rows and relabel them in increasing order. This gives a clean
\((10,4)\)-pattern. Denote the layers of this pattern by
\(L_1,L_2,L_3,L_4\).

Using the order $\prec$, list the vertices of \(L_1\) as $x_1\prec x_2\prec\cdots\prec x_{10}$, and let $I$ denote the smallest interval under $\prec$ containing all of $x_1,\dots,x_{10}$.
Applying Lemma~\ref{lem:interval-propagation}(3) three times to $I$ gives $|L_4\cap I^{+3}| \geq 7$.

By Lemma~\ref{lem:interval-propagation}(1), the layers of a clean pattern alternate between \(U\) and \(V\), so
\(L_1\) and \(L_4\) lie in opposite parts. Among the three vertices
added to each end of \(I\) in forming \(I^{+3}\), at most two lie
in the same part as \(L_4\). Consequently, at most four vertices of
\(L_4\cap I^{+3}\) lie outside \(I\). Within \(I\), the only possible
vertices of \(L_4\) that are consecutive to one of \(x_1,x_{10}\)
are the immediate successor of \(x_1\) and the immediate predecessor
of \(x_{10}\). Since \(|L_4\cap I^{+3}|\geq7\), there is therefore
some \(w\in L_4\) such that
\[
    x_1\prec w\prec x_{10},
\]
and \(w\) is consecutive to neither \(x_1\) nor \(x_{10}\).

It follows from \eqref{eq:order-adjacency} that
\[
    x_1w\in E(W_\infty)
    \quad\Longleftrightarrow\quad
    w\in V,
\]
and also
\[
    wx_{10}\in E(W_\infty)
    \quad\Longleftrightarrow\quad
    x_{10}\in V.
\]
However, $L_1$ and $L_4$ must lie in opposite parts, so exactly one of
these two pairs is an edge. Thus \(w\) has different adjacency to
\(x_1\) and \(x_{10}\), contradicting the requirement that the
nonconsecutive layers \(L_1\) and \(L_4\) are complete or anticomplete
to one another.

Therefore \(W_\infty\) contains no \((14,4)\)-pattern, and hence $\C$ is $(14,4)$-pattern-free.
\end{proof}

\bibliographystyle{plain}
\bibliography{refs}

\end{document}